\theoremstyle{definition}
\newtheorem{ntn}{Notation}[section]
\theoremstyle{plain}
\newtheorem{lem}[ntn]{Lemma}
\newtheorem{prp}[ntn]{Proposition}
\newtheorem{thm}[ntn]{Theorem}
\newtheorem{cor}[ntn]{Corollary}
\theoremstyle{definition}
\numberwithin{equation}{section}
\newcommand{\z}{\mathbb{Z}}
\newcommand{\q}{\mathbb{Q}}
\newcommand{\EE}{\mathcal{E}}
\newcommand{\BB}{\mathcal{B}}
\newcommand{\GG}{\mathcal{G}}
\newcommand{\RR}{\mathcal{R}}
\newcommand{\WW}{\mathcal{W}}
\newcommand{\II}{\mathcal{I}}
\newcommand{\RP}{\mathcal{RP}}
\newcommand{\PP}{\mathcal{P}}
\newcommand{\RB}{\mathcal{RB}}
\newcommand{\TT}{\mathcal{T}}
\renewcommand{\SS}{\mathcal{S}}
\newcommand{\mmm}{\mathfrak{m}}
\renewcommand{\aa}{{A^\times}}
\newcommand{\tors}{{{\rm Tor}_1^{\z}}}
\newcommand{\Lan}{\langle\! \langle}
\newcommand{\Ran}{\rangle \!\rangle}
\newcommand{\mt}{\mapsto}
\newcommand{\lan}{\langle}
\newcommand{\ran}{\rangle}
\newcommand{\se}{\subseteq}
\newcommand{\arr}{\rightarrow}
\newcommand{\larr}{\longrightarrow}
\newcommand{\harr}{\hookrightarrow}
\newcommand{\two}{\twoheadrightarrow}
\newcommand{\stabe}{{\rm Stab}}
\newcommand{\GL}{\mathit{{\rm GL}}}
\newcommand{\SL}{\mathit{{\rm SL}}}
\newcommand{\Ind}{{\rm Ind}}
\newcommand{\ind}{{\rm ind}}
\newcommand{\inc}{{\rm inc}}
\newcommand{\id}{{\rm id}}
\newcommand {\mtx}[2]
{\left(\!\!\!
	\begin{array}{cc}
		#1    \\
		#2 
	\end{array}
	\!\!\!\right)}
\newcommand {\mtxx}[4]
{\left(\!\!
	\begin{array}{cc}
		\!\!#1 & \!\!#2   \\
		\!\!#3 & \!\!#4
\end{array}\!\!
	\right)
}
\newtheoremstyle{athm}
{}
{}
{\itshape}
{}
{\scshape}
{}
{.5em}
{\thmnote{#3}}
\theoremstyle{athm}
\begin{document}
\title{A refined Bloch-Wigner exact sequence in characteristic 2}
\author{Behrooz Mirzaii}
\author{Elvis Torres P\'erez}
\begin{abstract}
Let $A$ be a local domain of characteristic $2$ such that  its residue field has more than $64$ elements. Then we find an exact relation between
the third integral homology of the group $\SL_2(A)$ and Hutchinson's refined Bloch group $\RB(A)$. 
\end{abstract}
\maketitle

\section*{Introduction}

The classical Bloch-Wigner exact sequence studies the indecomposable part of the third $K$-group of a field \cite{dupont-sah1982}. 
The general Bloch-Wigner exact sequence for fields, claims that for any field $F$ we have the exact sequence
\[
0 \arr \tors(\mu(F), \mu(F))^\sim \arr K_3^\ind(F)\arr \BB(F) \arr 0.
\]
Here $\BB(F)$, called the Bloch group of $F$, is a certain subgroup of the scissors congruence group $\PP(A)$ (see Section \ref{sec1}) 
and $\tors(\mu(F), \mu(F))^\sim$ is the unique nontrivial extension of  $\mu_2(F)$ by  $\tors(\mu(F), \mu(F))$ \cite{suslin1991}, 
\cite{hutchinson2013}.  This exact sequence can be extended to local domains, where its residue  field has more than $9$ elements 
\cite{mirzaii2017}.

When $F$ is quadratically closed, we have a natural isomorphism $H_3(\SL_2(F), \z)\simeq K_3^\ind(F)$ \cite{mirzaii-2008},\cite{sah1989}.  
In general we have a natural surjective map
\[
H_3(\SL_2(F),\z) \two K_3^\ind(F)
\]
(see \cite{hutchinson-tao2009}). The indecomposable group $K_3^\ind(F)$ has been studied extensively in the literature (see for 
example \cite{merkurjev-suslin1990} or \cite{levine1989}). In many applications in algebraic $K$-theory and number theory it is important to 
understand the structure of the group $H_3(\SL_2(F),\z)$.  When $F$ is not quadratically closed, the above map has a  
nontrivial, and often quite large, kernel (see  \cite{hutchinson2013}, \cite{hutchinson2021}). 

In this paper we prove a refined version of the Bloch-Wigner exact sequence over domains of characteristic 2 (see Theorem \ref{char=2} 
for the general statement).  More precisely we show that  for any local domain of  characteristic 2,  where its residue field has more than 64 elements, 
we have an exact sequence 
\[
0 \arr \tors(\mu(A), \mu(A)) \arr H_3(\SL_2(A),\z) \arr \RB(A) \arr 0.
\]
This gives a positive answer to a question raised by Coronado and Hutchinson in \cite{C-H2022} over such rings and it improves 
similar results of Hutchinson (see \cite{hutchinson-2013},\cite{hutchinson2017}), as it leaves no ambiguity on 2-torsion elements.

The homology groups $H_\bullet(\SL_2(A),\z)$ are naturally $\RR_A:=\z[A^\times/(A^\times)^2]$-modules and this module structure 
plays a central role in the study of the homology groups $H_3(\SL_2(A),\z)$.

The refined Bloch group of a ring $A$, introduced by Hutchinson, is a certain subgroup of the the refined scissors congruence group 
of a $A$.  The refined scissors congruence group $\RP_1(A)$ of $A$ is defined by a presentation analogous to $\PP(A)$ but as a 
module over  the group ring $\RR_A$ rather than as an abelian group. 

 In a series of papers \cite{hutchinson2013}, \cite{hutchinson-2013}, \cite{hutchinson2017}, \cite{hutchinson-2017}, \cite{hutchinson2021},
 Hutchinson extensively studied the homology group $H_3(\SL_2(A),\z)$ when $A$ is a field or a local ring with sufficiently large residue field.
 Recently it has been proved that the third homology of $\SL_2$ over discrete valuation rings satisfies certain localisation property \cite{hmm2022}.
 

Our main result  follows from a carful analysis of a spectral sequence which converge to the homology of $\SL_2$. Our spectral sequence is a 
variant of a spectral sequence which is studied by Hutchinson in his series of papers and is similar to the one studies  for $\GL_2$ in 
\cite{mirzaii2011}. This variant has certain advantage when it comes to calculation of some differentials.

{\bf Notations.} In this note all rings are commutative, except probably group rings, and have the unit element $1$. For a ring $A$, let
$\WW_A$ be the set of $a\in \aa$ such that $1-a\in \aa$. Thus
\[
\WW_A:=\{a\in A: a(1-a)\in \aa\}.
\]
Let $\GG_A:=\aa/(\aa)^2$ and set $\RR_A:=\z[\GG_A]$. The element of $\GG_A$ represented by $a \in \aa$ is denoted by $\lan a \ran$.
We set $\Lan a\Ran:=\lan a\ran -1\in \RR_A$.

\section{The scissors congruence groups}\label{sec1}

Let $A$ be a ring. The {\it scissors congruence group} $\PP(A)$ of $A$ is defined as the quotient of the free abelian group 
generated by symbols $[a]$, $a\in \WW_A$, by the subgroup generated by elements
\[
[a] -[b]+\bigg[\frac{b}{a}\bigg]-\bigg[\frac{1- a^{-1}}{1- b^{-1}}\bigg]+ \bigg[\frac{1-a}{1-b}\bigg],
\]
where $a, b, a/b  \in \WW_A$. Let 
\[
S_\z^2(\aa):=(\aa \otimes \aa)/\lan a\otimes b+b\otimes a:a,b \in \aa\ran.
\]
The map
\[
\lambda: \PP(A) \arr S_\z^2(\aa), \ \ \ \ [a] \mapsto a \otimes (1-a),
\]
is a well-defined homomorphism and its kernel is called the {\it Bloch group} of $A$ (see \cite{dupont-sah1982}, \cite{suslin1991}, 
\cite{mirzaii2011})). We denote this kernel by $\BB(A)$. 

Let $\RP(A)$ be the quotient of the free $\RR_A$-module generated by symbols $[a]$, $a\in \WW_A$, by the $\RR_A$-submodule 
generated by the elements
\[
[a]-[b]+\lan a\ran\bigg[\frac{b}{a}\bigg]-\lan a^{-1}-1\ran\Bigg[\frac{1-a^{-1}}{1-b^{-1}}\Bigg] +\lan 1-a\ran\Bigg[\frac{1-a}{1-b}\Bigg],
\]
where $a,b,a/b \in \WW_A$. We have a natural surjective map $\RP(A)\arr \PP(A)$ and from the definition it follows immediately that
\[
\PP(A)\simeq \RP(A)_{\GG_A}=H_0(\GG_A, \RP(A)).
\]

Let $\II_A$ be the augmentation ideal of the group ring $\RR_A=\z[\GG_A]$. By direct computation one can show that the map
\[
\lambda_1: \RP(A) \arr \II_A^2, \ \ \ \ \ \ [a] \mapsto \Lan a \Ran\Lan 1-a\Ran
\]
is a well-defined $\RR_A$-homomorphism. If we consider $S_\z^2(\aa)$ as
a trivial $\GG_A$-module, then
\[
\lambda_2: \RP(A) \arr S_\z^2(\aa), \ \ \ \ \ [a] \mapsto a \otimes (1-a),
\]
is a homomorphism of $\RR_A$-modules. In fact $\lambda_2$ is
the composite $\RP(A) \arr \PP(A) \overset{\lambda}{\arr} S_\z^2(\aa)$.

Hutchinson defined the {\it refined scissors congruence group of $A$} as the $\RR_A$-module 
\[
\RP_1(A):=\ker(\lambda_1:\RP(A) \arr \II_A^2).
\]
He defined the {\it refined Bloch group} of $A$ as the $\RR_A$-module 
\[
\RB(A):=\ker(\lambda_2|_{\RP_1(A)}:\RP_1(A) \arr S_\z^2(\aa))
\] 
(see \cite[page 28]{hutchinson-2013}, \cite[Subsection 2.3]{hutchinson2013}).

\section{The complex of unimodular vectors}

The bridge between the third homology of $\SL_2(A)$ and the refined scissors congruence group usually is given by a complex 
built out of unimodular vectors.

Let $A$ be a commutative ring. A (column) vector ${\bf u}={\mtx {u_1} {u_2}}\in A^2$ is said to be unimodular if $u_1A+u_2A=A$.  
Equivalently, ${\bf u}={\mtx {u_1} {u_2}}$ is said to be unimodular if there exists a vector ${\bf v}={\mtx {v_1} {v_2}}$ such that 
$\{{\bf u}, {\bf v}\}$ is a basis of $A^2$.

For any non-negative integer $n$, let $X_n(A^2)$ be the free abelian group generated by the set of all $(n+1)$-tuples 
$(\lan{\bf v_0}\ran, \dots, \lan{\bf v_n}\ran)$, where every ${\bf v_i} \in A^2$ is unimodular and any two distinct vectors 
${\bf v_i}, {\bf v_j}$ are a basis of $A^2$. Observe that $\lan{\bf v}\ran\se A^2$ is the line generated by ${\bf v}\in A^2$, 
i.e. $\lan{\bf v}\ran={\bf v}A$.

We consider $X_n(A^2)$ as a left $\GL_2(A)$-module (respectively left $\SL_2(A)$-module) in a natural way. If necessary, we 
convert this action to a right action by the definition $m.g:=g^{-1}m$. Let us define the $n$-th differential operator
\[
\partial_n : X_n(A^2) \arr X_{n-1}(A^2), \ \ n\ge 1,
\]
as an alternating sum of face operators which throws away the $i$-th component of generators. Let $\partial_{-1}=\epsilon: 
X_0(A^2) \arr \z$ be defined by $\sum_i n_i(\lan {\bf v_{0,i}}\ran) \mt \sum_i n_i$. Hence we have the complex
\[
X_\bullet(A^2) \overset{\epsilon}{\arr} \z: \  \cdots \larr  X_2(A^2) \overset{\partial_2}{\larr} X_1(A^2) \overset{\partial_1}{\larr} 
X_0(A^2) \overset{\epsilon}{\arr}  \z \arr 0.
\]
The following is an important case that we should keep in our mind.

\begin{prp}[Hutchinson]\label{GE2C}
Let $A$ be a local ring. If $|A/\mmm_A|=k$, then the complex $X_\bullet(A^2) \arr \z$ is exact  in dimension $< k$.
\end{prp}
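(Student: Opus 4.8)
The plan is to prove exactness by an explicit contracting homotopy in degrees below $k$, reading off all the general–position conditions from the residue field. Since $A$ is local, a vector $\mathbf{v}\in A^2$ is unimodular exactly when its reduction modulo $\mmm_A$ is nonzero, and two unimodular vectors $\mathbf{u},\mathbf{v}$ form a basis of $A^2$ exactly when their reductions span $(A/\mmm_A)^2$, i.e. when $\lan\mathbf{u}\ran$ and $\lan\mathbf{v}\ran$ reduce to distinct points of $\Pp^1(A/\mmm_A)$. Thus a generator $(\lan\mathbf{v}_0\ran,\dots,\lan\mathbf{v}_n\ran)$ of $X_n(A^2)$ reduces to an $(n+1)$–tuple of pairwise distinct points of $\Pp^1(A/\mmm_A)$, a set of cardinality $k+1$. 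First I would record the numerical input that drives everything: a single generator in degree $n<k$ involves at most $n+1\le k$ lines, hence omits at least one of the $k+1$ residue classes, and any omitted class lifts to a unimodular vector in general position with all of the $\mathbf{v}_i$.

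Next I would fix a unimodular vector $\mathbf{w}$ and introduce the cone operator
\[
h_n\colon X_n(A^2)\arr X_{n+1}(A^2),\qquad (\lan\mathbf{v}_0\ran,\dots,\lan\mathbf{v}_n\ran)\longmapsto(\lan\mathbf{w}\ran,\lan\mathbf{v}_0\ran,\dots,\lan\mathbf{v}_n\ran),
\]
defined on those generators all of whose lines are in general position with $\lan\mathbf{w}\ran$ and set to $0$ on the rest. A direct check with the alternating face maps shows that on any cycle $z$ supported on lines in general position with $\lan\mathbf{w}\ran$ one has $\partial_{n+1}h_n(z)=z-h_{n-1}\partial_n(z)=z$, so such a $z$ is a boundary. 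In particular, if the finitely many lines occurring in a cycle $z\in\ker\partial_n$ omit some residue class, I may take $\mathbf{w}$ lifting that class and conclude at once. This is the mechanism that makes the residue–field model transparent: over $A/\mmm_A$ the group $X_n((A/\mmm_A)^2)$ is the free abelian group on ordered $(n+1)$–tuples of distinct points of a $(k+1)$–element set, and a cone/symmetrisation computation shows that this purely combinatorial complex is exact in degrees $<k$.

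The main obstacle is that a cycle $z\in\ker\partial_n$ need not omit any residue class: although each individual generator omits at least one, the generators may collectively occupy all $k+1$ points of $\Pp^1(A/\mmm_A)$, so no single $\mathbf{w}$ is transverse to the entire support and the cone operator acquires genuine error terms. I expect the crux of the proof to be an inductive reduction of the number of residue classes met by the support of $z$. Fixing a class $p$ with lift $\mathbf{w}$, the homotopy identity takes the form $\partial_{n+1}h_n+h_{n-1}\partial_n=\id-\phi_n$, where $\phi_n$ is supported on the generators having a line that reduces to $p$; evaluating on $z$ expresses $z$, modulo a boundary, in terms of $\phi_n(z)$, and I would use that each generator meeting $p$ still omits some other class to replace its offending $p$–line and thereby produce a homologous cycle meeting strictly fewer classes. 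After finitely many such steps the support omits a class and the second paragraph applies. Throughout, the delicate part is the bookkeeping of signs together with the basis (general–position) constraints, so that every intermediate tuple remains a legitimate generator of $X_\bullet(A^2)$; this is exactly where the hypothesis $n<k$ is indispensable, since it is what always leaves a free residue class into which the offending lines can be moved.
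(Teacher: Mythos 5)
The paper itself offers no proof of this proposition: it is quoted from Hutchinson (\cite[Lemma 3.21]{hutchinson2017}), so your attempt has to be measured against what a complete argument actually requires. Your first two paragraphs are correct and standard: over a local ring, unimodularity and the basis condition are detected on the residue field, so a generator of $X_n(A^2)$ with $n<k$ is an $(n+1)$-tuple of lines with pairwise distinct images in the $(k+1)$-point set $\Pp^1(A/\mmm_A)$, and the cone operator $h_w$ disposes of any cycle whose support omits the residue class of $w$. The gap is in your third paragraph, which is precisely where the theorem lives.

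The mechanism you propose there does not close. First, the operator $\phi_n=\id-\partial_{n+1}h_n-h_{n-1}\partial_n$ sends a generator $y$ whose $i$-th line reduces to $p$ to $y-(-1)^i(\lan w\ran,\partial^{(i)}y)$, and \emph{both} terms meet $p$ (the second contains $\lan w\ran$ itself); so replacing $z$ by the homologous cycle $\phi_n(z)$ does not decrease the number of residue classes met -- it produces a cycle all of whose generators meet $p$. Second, replacing an offending $p$-line $v_i$ of a generator by a lift $u$ of a class omitted by that generator can only be implemented modulo the boundary of the insertion $(v_0,\dots,v_{i-1},u,v_i,\dots,v_n)$, and besides the two faces you want, that boundary has $n$ further faces which contain both $u$ and $v_i$ and hence still meet $p$; so ``meets strictly fewer classes'' is not achieved, and your induction has no visible termination. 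Third, and most tellingly: when $A$ is itself a finite field with $k$ elements (so $\mmm_A=0$), the proposition \emph{is} the statement that the complex of injective words on a $(k+1)$-element set is exact in dimensions $<k$ -- a genuinely nontrivial combinatorial theorem (Farmer, Bj\"orner--Wachs), usually proved by shellability or discrete Morse theory, and not by the ``cone/symmetrisation computation'' your second paragraph invokes; no contracting homotopy can exist, since the top homology of that complex is free of rank equal to the number of derangements of $k+1$ letters. Since this case sits inside the general one, any correct proof must contain that content; the fact that your sketch would dispose of it with an elementary homotopy is the clearest sign that the essential step is missing rather than merely left as bookkeeping.
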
 
\begin{proof}
See  \cite[Lemma~3.21]{hutchinson2017}.
\end{proof}

\section{The main spectral sequence}

Let $Z_i(A^2)=\ker(\partial_i)$. Assume that $X_\bullet(A^2) \arr \z$ is exact in dimension $<2$. Consider the complex
\begin{equation*}
0 \arr Z_1(A^2)  \overset{\inc}{\arr} X_1(A^2)  \overset{\partial_1}{\arr} X_0(A^2)  \arr 0.
\end{equation*}
Let $F_\bullet \arr \z$ be a projective resolution of $\z$ over $\SL_2(A)$. Let $D_{\bullet,\bullet}$ be the double complex
\[
D_{\bullet,\bullet}: 0 \arr F_\bullet \otimes_{\SL_2(A)} Z_1(A^2)  \overset{\id_{F_\bullet}\otimes\inc}{\larr} 
F_\bullet \otimes_{\SL_2(A)}X_1(A^2)  \overset{\id_{F_\bullet}\otimes\partial_1}{\larr} F_\bullet \otimes_{\SL_2(A)}X_0(A^2)  \arr 0.
\]
In our calculations  we usually use either the bar resolution $B_\bullet(\SL_2(A))\arr \z$ or the standard resolution
$C_\bullet(\SL_2(A))\arr \z$ \cite{brown1994}. Note that we have the natural morphisms 
\[
B_\bullet(\SL_2(A)) \arr C_\bullet(\SL_2(A)),  \ \ \ [h_1|h_2|\cdots|h_n]\mt (1,h_1,h_1h_2, \dots, h_1h_2\cdots h_n),
\]
\[
C_\bullet(\SL_2(A)) \arr B_\bullet(\SL_2(A)), \ \ \ (g_0,\dots, g_n) \mt  g_0[g_0^{-1}g_1| g_1^{-1}g_2|\cdots |g_{n-1}^{-1}g_n],
\]
which induce the identity on the homology of $\SL_2(A)$ with any coefficient \cite{brown1994}.

From the double complex $D_{\bullet,\bullet}$ we obtain the first quadrant spectral sequence
\[
E^1_{p.q}=\left\{\begin{array}{ll}
	H_q(\SL_2(A),X_p(A^2)) & p=0,1\\
	H_q(\SL_2(A),Z_1(A^2)) & p=2\\
	0 & p>2
\end{array}
\right.
\Longrightarrow H_{p+q}(\SL_2(A),\z).
\]
The group $\SL_2(A)$ acts transitively on the sets of generators of $X_i(A^2)$  for $i=0,1$. Let
\[
{\bf\infty}:=\lan {\bf e_1}\ran, \ \ \  {\bf 0}:=\lan {\bf e_2}\ran , \ \ \  {\bf a}:=\lan {\bf e_1}+ a{\bf e_2}\ran, \ \ \ a\in \aa.
\]
We choose $({\bf \infty})$ and  
$({\bf \infty} ,{\bf 0})$ as representatives of the orbit of the generators of $X_0(A^2)$ and $X_1(A^2)$  respectively.
Therefore 
\[
X_0\simeq \Ind _{B(A)}^{\SL_2(A)}\z, \ \ \ \ \ \ \ \ \  X_1\simeq \Ind _{T(A)}^{\SL_2(A)}\z,
\]
where 
\[
B(A):=\stabe_{\SL_2(A)}(\infty)=\Bigg\{\begin{pmatrix}
a & b\\
0 & a^{-1}
\end{pmatrix}:a\in \aa, b\in A\bigg\},
\]
\[
 T(A):=\stabe_{\SL_2(A)}(\infty,{\bf 0})=\Bigg\{\begin{pmatrix}
a & 0\\
0 & a^{-1}
\end{pmatrix}:a\in \aa\bigg\}.
\]
Note that $T(A)\simeq \aa$. In our calculations usually we identify $T(A)$ with $\aa$. By Shapiro's lemma we have
\[
E_{0,q}^1 \simeq H_q(B(A),\z), \  \  \ 
E_{1,q}^1 \simeq H_q(T(A),\z).
\]
In particular $E_{0,0}^1\simeq \z\simeq E_{1,0}^1$. Moreover
\[
d_{1, q}^1=H_q(\sigma) - H_q(\inc),
\]
where $\sigma: T(A) \arr B(A)$ is given by $\sigma(X)= wX w^{-1}$, $w={\mtxx 0 1 {-1} 0}$.

These implies that  $d_{1,0}^1$ is trivial, $d_{1,1}^1$  is induced by the map $T(A) \arr B(A)$ given by
$X\mt X^{-2}$ and $d_{1,2}^1$ is trivial.  Thus 
\[
\ker(d_{1,1}^1)\simeq\mu_2(A)=\{b\in \aa:b^2=1\}. 
\]
It is straightforward to check  that for any $b\in \mu_2(A)$,
\[
d_{2,1}^1\bigg([b]\otimes \partial_2(\infty, {\bf 0}, {\bf a})\bigg)=b. 
\]
Therefore $E_{1,1}^2=0$.

\section{The refined scissors congruence group}

Assume that the complex $X_\bullet(A^2) \arr \z$ is exact in dimension $<4$.  From the short exact sequence
\[
0 \arr  Z_2(A^2) \arr X_2(A^2) \arr Z_1(A^2) \arr 0
\]
we obtain the  long exact sequence
\begin{equation}\label{exatc1}
\!\!\!\!\!\!\!\!\!\!\!\!\!\!\!\!\!\!\!\!\!
H_1(\SL_2(A),X_2(A^2)) \arr H_1(\SL_2(A),Z_1(A^2)) \overset{\delta}{\arr} H_0(\SL_2(A),Z_2(A^2))   
\end{equation}
\[
\ \ \ \ \ \ \ \ \ \ \ \ \ \ \ \ \  \ \ \ \ \ \ \  \ \ \ \ \ \ \  \ \ \ \ \ \ \ 
\arr H_0(\SL_2(A),X_2(A^2)) \arr H_0(\SL_2(A),Z_1(A^2)) \arr 0.
\]
Choose $(\infty, {\bf 0}, {\bf a})$, $\lan a\ran \in \GG_A$, as representatives of the orbits of the generators of $X_2(A^2)$. Then
\[
X_2\simeq \bigoplus_{\lan a\ran\in \GG_A} \Ind _{\mu_2(A)}^{\SL_2(A)}\z\lan a\ran, 
\]
where $\mu_2(A)=\stabe_{\SL_2(A)}(\infty,{\bf 0}, {\bf a})$.  Thus
\[
H_q(\SL_2(A),X_2(A^2)) \simeq \bigoplus_{\lan a\ran\in \GG_A} H_q(\mu_2(A), \z)\simeq \RR_A\otimes H_q(\mu_2(A),\z).
\]
From the exact sequence $X_4(A^2) \overset{\partial_4}{\arr} X_3(A^2)  \overset{\partial_3}{\arr} Z_2(A^2) \arr 0$
 we obtain the exact sequence
 \[
 X_4(A^2)_{\SL_2(A)} \overset{\overline{\partial_4}}{\arr} X_3(A^2)_{\SL_2(A)}    \overset{\overline{\partial_3}}{\arr} Z_2(A^2)_{\SL_2(A)}  \arr 0
 \]
 of $\RR_A$-modules. The orbits of the action of  $\SL_2(A)$ on $X_3(A)$  and $X_4(A)$ are  represented by
\[
\lan a\ran[x]:=(\lan {\bf e_1}\ran,\lan{\bf e_2}\ran, \lan{\bf e_1}+a {\bf e_2} \ran, \lan{\bf e_1}+ax{\bf e_2}\ran),  \  \  \  \   
\lan a\ran\in \GG_A, x\in \WW_A,
\]
and 
\[
\lan a\ran[x,y]:=(\lan{\bf e_1}\ran, \lan{\bf e_2}\ran, \lan{\bf e_1}+a{\bf e_2}\ran, 
\lan{\bf e_1}+ax{\bf e_2}\ran, \lan{\bf e_1}+ay{\bf e_2}\ran),  \ \  \lan a\ran\in \GG_A, x,y,x/y\in \WW_A,
\]
respectively. Thus $X_3(A^2)_{\SL_2(A)} $ is the free $\RR_A$-module generated by the symbols
$[x]$, $x\in \WW_A$ and  $X_4(A^2)_{\SL_2(A)} $ is the free $\RR_A$-module generated by the symbols
$[x,y]$, $x,y,x/y\in \WW_A$. It is straightforward to check that 
\[
\overline{\partial_4}([x,y])=[x]-[y]+\lan x\ran\bigg[\frac{y}{x}\bigg]- \lan x^{-1}-1\ran\Bigg[\frac{1-x^{-1}}{1-y^{-1}}\Bigg] - 
\lan 1-x\ran\Bigg[\frac{1-x}{1-y}\Bigg].
\]
These imply that
\[
H_0(\SL_2(A),Z_2(A^2)) \simeq \RP(A).
\]
It is not difficult to see that the composite
\[
 \RP(A)\simeq H_0(\SL_2(A),Z_2(A^2)) \arr  H_0(\SL_2(A),X_2(A^2))\simeq \z[\GG_A]=\RR_A
\]
is given by
\[
[x]\mapsto \Lan x\Ran\Lan 1-x\Ran.
\]
This is the map $\lambda_1$ discussed in  Section \ref{sec1}. 

we obtain the exact sequence
\begin{equation}\label{exact2}
H_1(\SL_2(A), Z_2(A^2)) \arr \RR_A\otimes\mu_2(A) \arr E_{2,1}^1 \arr \mathcal{RP}_1(A) \arr 0 .
\end{equation}

\section{The refined bloch group}

Let $A$ be a ring such that $\mu_2(A)=1$ (e.g. a domains of characteristic two). Moreover let
$X_\bullet(A^2) \arr \z$ is exact in dimension $<4$.  It follows from the exact sequence (\ref{exact2}) that
\[
E_{2,1}^1=H_1(\SL_2(A),Z_1(A^2)) \simeq \mathcal{RP}_1(A).
\]
Since $\ker(d_{1,1})\simeq \mu_2(A)=1$, we have $E_{2,1}^2=E_{2,1}^1$.

The group $B(A)$ sits in the extension 
\[
1\arr  N(A) \arr B(A) \overset{\alpha}{\arr}  T(A) \arr 1, 
\]
where $N(A):=\Bigg\{\begin{pmatrix}
1 & b\\
0 & 1
\end{pmatrix} \in \SL_2(A)| b\in A \Bigg \}$  and $\alpha \begin{pmatrix}
a & b\\
0 & a^{-1}
\end{pmatrix}=\begin{pmatrix}
a & 0\\
0 & a^{-1}
\end{pmatrix}$.
Note that $N(A)\simeq A$.  This extension splits canonically. Let
 \[
H_2(B(A),\z) \simeq H_2(T(A),\z) \oplus \SS_2\simeq (\aa \wedge \aa) \oplus \SS_2
\]
and consider the differential 
\[
d^2_{2,1}: \RP_1(A)\simeq E^2_{2,1}\to H_2(B(A),\z)\simeq (A^{\times}\wedge A^{\times})\oplus \SS_2.
\]

\begin{lem}\label{ker=RB}
The kernel of the composite $\RP_1(A) \overset{d_{2,1}^ 2}{\larr} H_2(B(A),\z) \overset{\alpha_\ast}{\larr}  A^{\times}\wedge A^{\times} $
is isomorphic with the refined Bloch group $\RB(A)$.
\end{lem}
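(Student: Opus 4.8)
The plan is to compute the composite $\alpha_\ast\circ d_{2,1}^2$ explicitly and to recognise it as the map $\lambda_2$ of Section~\ref{sec1} followed by the canonical surjection $q\colon S_\z^2(\aa)\arr\aa\wedge\aa$, and then to compare kernels. Recall from the previous section that $E_{2,1}^2\simeq\RP_1(A)$, the isomorphism carrying the class of a generator to $[x]$. First I would lift $[x]$ to an explicit $1$-cycle in the double complex $D_{\bullet,\bullet}$, representing the corresponding class in $E_{2,1}^1=H_1(\SL_2(A),Z_1(A^2))$ by a bar-resolution chain supported on the orbit representatives $(\infty,{\bf 0},{\bf a})$ and their faces. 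The differential $d_{2,1}^2$ is then computed by the standard zig-zag: apply the horizontal map $\id\otimes\partial_1$, lift the result along the vertical (group-homology) differential, and apply $\id\otimes\partial_1$ once more, landing in $E_{0,2}^1=H_2(B(A),\z)$.

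Next I would push this class into the torus. Using the canonical splitting of $1\arr N(A)\arr B(A)\overset{\alpha}{\arr}T(A)\arr1$ and the already recorded description of the $d^1$-differentials through $\sigma(X)=wXw^{-1}$, the projection $\alpha_\ast$ onto $H_2(T(A),\z)=\aa\wedge\aa$ kills the unipotent contribution and reduces the computation to the diagonal torus. The outcome I expect is the formula $\alpha_\ast d_{2,1}^2([x])=\overline{x\otimes(1-x)}$ in $\aa\wedge\aa$, up to an overall sign; equivalently $\alpha_\ast\circ d_{2,1}^2=q\circ\lambda_2$, since $\lambda_2([x])=x\otimes(1-x)$ and $q$ is the quotient $S_\z^2(\aa)\arr\aa\wedge\aa$. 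As both maps are $\RR_A$-linear into a module on which $\GG_A$ acts trivially, it suffices to check this on the generators $[x]$.

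Finally I would compare the two kernels. The inclusion $\RB(A)=\ker(\lambda_2|_{\RP_1(A)})\se\ker(\alpha_\ast d_{2,1}^2)$ is immediate. For the reverse inclusion the point is that $q$ is injective on the image $\lambda_2(\RP_1(A))$. Here I would use the exact sequence
\[
0\arr\GG_A\arr S_\z^2(\aa)\overset{q}{\arr}\aa\wedge\aa\arr0,\qquad \lan c\ran\mt \overline{c\otimes c},
\]
coming from $\aa/(\aa)^2=\GG_A$, whose kernel consists precisely of the $2$-torsion square classes. The main obstacle is to show that no nonzero element of $\RP_1(A)$ maps under $\lambda_2$ into this copy of $\GG_A$; this is exactly where the defining condition $\lambda_1=0$ of $\RP_1(A)$ enters, via the identification $\II_A/\II_A^2\simeq\GG_A$ and the compatibility of $\lambda_1$ with the square-class obstruction along the filtration of $\II_A$. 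Granting this, $\ker(\alpha_\ast d_{2,1}^2)=\ker(\lambda_2|_{\RP_1(A)})=\RB(A)$, as claimed. Besides this kernel comparison, the most laborious step is the zig-zag computation of $d_{2,1}^2$, which I expect to be the technical heart of the argument.
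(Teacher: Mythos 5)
Your proposal takes a genuinely different route from the paper, but as it stands it has two concrete gaps, one structural and one computational. The structural gap: your plan to ``lift $[x]$ to an explicit $1$-cycle'' and check the formula ``on the generators $[x]$'' does not typecheck. The domain of $d_{2,1}^2$ is $E^2_{2,1}=E^1_{2,1}=H_1(\SL_2(A),Z_1(A^2))$, and its identification with $\RP_1(A)$ is the connecting homomorphism $\delta$ of $0\arr Z_2(A^2)\arr X_2(A^2)\arr Z_1(A^2)\arr 0$, whose image is $\ker(\lambda_1)=\RP_1(A)\se \RP(A)$. A symbol $[x]$ with $\Lan x\Ran\Lan 1-x\Ran\neq 0$ lies outside $\RP_1(A)$, hence corresponds to \emph{no} class in $H_1(\SL_2(A),Z_1(A^2))$ and admits no lift to a $1$-cycle; and since $\RP_1(A)$ is a kernel rather than a free module on symbols, $\RR_A$-linearity cannot reduce the computation to generators. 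This is precisely the difficulty the paper's proof is built to avoid: it never computes $d^2_{2,1}$ inside the $\SL_2$ spectral sequence at all. Instead it uses the morphism to the $\GL_2$ spectral sequence of \cite{mirzaii2011}, where the relevant $E^2$-term is $\PP(A)$ (there every symbol $[x]$ does lift), quotes the computation of that differential on generators, and transports the answer back through the comparison map.

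The computational gap: the formula you expect, $\alpha_\ast d^2_{2,1}=\pm\, q\circ\lambda_2$, is off by a factor of $2$, and this breaks your kernel comparison. In the paper's commutative diagram the comparison map $\aa\wedge\aa \arr (\aa\wedge\aa)\oplus S^2_\z(\aa)$ is $a\wedge b\mt \big(2(a\wedge b),\,2(a\otimes b)\big)$, while the bottom composite is $[a]\mt \big(a\wedge(1-a),\,-a\otimes(1-a)\big)$; commutativity therefore gives, for $u\in\RP_1(A)$ and $\xi_u:=\alpha_\ast d^2_{2,1}(u)$, the relations $2\xi_u=q\lambda_2(u)$ and $\rho(\xi_u)=-\lambda_2(u)$, where $\rho(a\wedge b)=2\,\overline{a\otimes b}$. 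So $\alpha_\ast d^2_{2,1}$ is \emph{not} $q\circ\lambda_2$: if both identities held, then $q\lambda_2|_{\RP_1(A)}$ would vanish identically and your kernel would be all of $\RP_1(A)$, contradicting $\RB(A)\subsetneq\RP_1(A)$ in general. Consequently the step your argument reduces to --- injectivity of $q$ on $\lambda_2(\RP_1(A))$, i.e.\ $\lambda_2(\RP_1(A))\cap\ker q=0$ (your short exact sequence $0\arr\GG_A\arr S^2_\z(\aa)\arr\aa\wedge\aa\arr 0$ is itself correct) --- is not the statement the lemma needs; what both inclusions actually require, and what the paper uses, is injectivity of $\xi\mt(2\xi,\rho(\xi))$, which is where the hypothesis $\mu_2(A)=1$ enters. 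Even your ``immediate'' inclusion $\RB(A)\se\ker(\alpha_\ast d^2_{2,1})$ depends on this injectivity once the correct formula is in place. Finally, your closing step is by your own description the ``main obstacle'' and is left as an unsubstantiated hope: no mechanism is offered by which $\lambda_1(u)=0$ would forbid $\lambda_2(u)$ from being a nonzero element $\overline{c\otimes c}\in\ker q$, and the paper neither proves nor needs that claim.
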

\begin{proof}
From the complex $0 \arr Z_1(A^2)  \overset{\inc}{\arr} X_1(A^2)  \overset{\partial_1}{\arr} X_0(A^2)  \arr 0$ we obtain the
 first quadrant spectral sequence
\[
\EE^1_{p.q}=\left\{\begin{array}{ll}
	H_q(\GL_2(A),X_p(A^2)) & p=0,1\\
	H_q(\GL_2(A),Z_1(A^2)) & p=2\\
	0 & p>2
\end{array}
\right.
\Longrightarrow H_{p+q}(\GL_2(A),\z).
\]
This spectral sequence have been studied in \cite{mirzaii2011}. The inclusion $\SL_2(A) \harr \GL_2(A)$ gives us the morphism of spectral sequences
\[
\begin{tikzcd}
	E^1_{p,q} \ar[d] \ar[r,Rightarrow] & H_{p+q}(\SL_2(A),\z)\ar[d]\\
	\EE^1_{p,q} \ar[r,Rightarrow] & H_{p+q}(\GL_2(A),\z).
\end{tikzcd}
\]
From this we obtains the commutative diagram
\[
\begin{tikzcd}
\RP_1(A) \ar[d] \ar[r, "\alpha_\ast\circ d_{2,1}^2 "] \ar[d]& 
H_2(T(A),\z)\ar[r, "\simeq"] \ar[d]&\aa\wedge \aa \ar[d, hook]\\
\PP(A) \ar[r,"\beta_\ast\circ d_{2,1}^2 "] & 
\displaystyle\frac{H_2(T_2(A),\z)}{(\sigma_\ast-\inc_\ast)(H_2(T_2(A),\z))} \ar[r, "\simeq"] & (\aa\wedge\aa) \oplus S_\z^2(A),
\end{tikzcd}
\]
where
\[
B_2(A):=\stabe_{\GL_2(A)}(\infty)=\Bigg\{\begin{pmatrix}
a & b\\
0 & d
\end{pmatrix}:a, d\in \aa, b\in A\bigg\},
\]
\[
 T_2(A):=\stabe_{\GL_2(A)}(\infty,{\bf 0})=\Bigg\{\begin{pmatrix}
a & 0\\
0 & d
\end{pmatrix}:a, d\in \aa\bigg\},
\]
and $\beta: B_2(A) \arr T_2(A)$ is defined similar to $\alpha$. It is straightforward to check that
the vertical map on the right is given by
\[
 a\wedge b\arr \Big(2(a\wedge b), 2(a\otimes b)\Big).
\]
Moreover by a  tedious computation one can show that the bottom horizontal composite is given by
\[
[a]\mt (a\wedge (1-a), -a \otimes (1-a)),
\]
(for details see \cite{mirzaii2017}, \cite{mirzaii2011}). The lemma follows from the commutativity of the above diagram.
\end{proof}

Our calculations suggest that the image of the differential
\[
d^2_{2,1}: \RP_1(A)\simeq E^2_{2,1}\to H_2(B(A),\z)\simeq (A^{\times}\wedge A^{\times})\oplus \SS_2.
\]
may have a nonzero part in $\SS_2\se H_2(B(A),\z)$ (see \cite{mirzaii2017}). To obtain a good  relation between the third homology of  $\SL_2$ and  
the refined Bloch group $\RB(A)$,  we consider those rings that the summand $\SS_n$ of $H_n(B(A),\z)$ is trivial when
$n=2,3$. The isomorphism $H_n(T(A),\z)\overset{\inc_\ast}{\simeq} H_n(B(A),\z)$ has been already studied  for a certain class 
of rings. For example we have the following result of Hutchinson.

\begin{prp}\label{iso-hut}
Let $A$ be any local integral domain. If the residue field $A/\mmm_A$ is finite of order $p^d$ we suppose that $(p -1)d > 2n$.
Then the natural maps $T(A) \arr B(A)$ induces the isomorphism $H_n(T(A), \z)\simeq H_n(B(A), \z)$.
\end{prp}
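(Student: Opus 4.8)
The plan is to exploit the canonically split extension
\[
1 \arr N(A) \arr B(A) \overset{\alpha}{\arr} T(A) \arr 1,
\]
in which $N(A)\cong (A,+)$ and $T(A)\cong \aa$, and where a direct matrix computation shows that $a\in \aa\cong T(A)$ acts on $N(A)\cong A$ by multiplication by $a^2$. The inclusion $\inc\colon T(A)\harr B(A)$ splitting $\alpha$ is exactly the map whose effect on homology we must compute, and $\alpha_\ast$ is a left inverse to $\inc_\ast$. First I would write down the Lyndon--Hochschild--Serre spectral sequence
\[
E^2_{s,t}=H_s\big(T(A),H_t(N(A),\z)\big)\Longrightarrow H_{s+t}(B(A),\z),
\]
and reduce the statement to the vanishing of the positive rows in the relevant range: once $E^2_{s,t}=0$ for all $s$ and all $1\le t\le n$, every differential into or out of the bottom row in total degree $\le n$ vanishes, so $E^2_{n,0}=H_n(T(A),\z)$ survives to $H_n(B(A),\z)$; since $\alpha_\ast\circ\inc_\ast=\id$, the resulting edge isomorphism is precisely $\inc_\ast$.

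To kill the higher rows I would use the \emph{centre-kills} principle. As $T(A)$ is abelian, every $\tau\in T(A)$ is central, hence acts as the identity on $H_s(T(A),M)$ for any coefficient module $M$; on the other hand $\tau$ acts through its action on $M$, and these two descriptions agree, so $(\tau-1)_\ast=0$ on $H_s(T(A),M)$. Consequently, if for a fixed $t\ge 1$ I can exhibit a single $\tau$ such that $\tau-1$ is \emph{invertible} on $M=H_t(N(A),\z)$, then $(\tau-1)_\ast$ is at once zero and an automorphism of $H_s(T(A),M)$, forcing $H_s(T(A),M)=0$ for every $s$. Thus the proposition reduces to producing, for each $1\le t\le n$, a unit acting on $H_t(N(A),\z)$ without the eigenvalue $1$.

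This is where the residue field hypothesis enters. If $A/\mmm_A$ is infinite, $\aa$ is large enough for a generic $\tau$ to avoid the finitely many forbidden eigenvalues, and there is nothing more to do. If $A/\mmm_A\cong\F_{p^d}$ and $\char A=p$ (the situation of this paper), then $N(A)$ is an $\F_p$-vector space, $H_t(N(A),\z)$ is $p$-torsion for $t\ge 1$, and invertibility may be tested after $\otimes_{\F_p}\overline{\F_p}$, where a lift $\tau$ of a generator $g$ of $\F_{p^d}^\times$ acts semisimply. The characters of $\langle g\rangle$ occurring on $H_t(N(A),\z)$ all have the shape $g^{2V}$, where the powers of $p$ come from the Frobenius-conjugate characters of $N(A)$, the factor $2$ from the squaring action, and $V$ is a sum of at most $t\le n$ powers of $p$ because a degree-$t$ homology class of an elementary abelian group is built from at most $t$ degree-one classes. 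So $\tau-1$ fails to be invertible only if $(p^d-1)\mid 2V$ for some such $V$. But then $2V$ is a positive multiple $k(p^d-1)$ written as a sum of at most $2n$ powers of $p$, whence its base-$p$ digit sum is $\le 2n$ (carrying only lowers digit sums); while for the small $k$ that arise here one has $k(p^d-1)=(k-1)p^d+(p^d-k)$, with the base-$p$ digits of $k-1$ and of $p^d-k$ occupying disjoint blocks and being digit-wise complementary, so the digit sum equals exactly $(p-1)d$. The hypothesis $(p-1)d>2n$ makes $(p-1)d\le 2n$ impossible, so no such $V$ exists and $\tau-1$ is invertible.

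The step I expect to be the main obstacle is the precise determination of $H_t(N(A),\z)$ as an $\aa$-module: isolating which Frobenius-twisted characters actually occur and bounding the number of tensor factors by the homological degree, uniformly in $t$. This is delicate when $A$ is strictly larger than its residue field, so that $N(A)$ is an infinite-dimensional $\F_p$-space, and in the mixed-characteristic case $\char A=0$, where $N(A)$ is torsion-free and $H_t(N(A),\z)$ must instead be understood through the exterior powers $\wedge^t_\q(A\otimes\q)$ with the squaring action, the invertibility of $\tau-1$ then amounting to the absence of products of $t$ conjugates of $a^2$ equal to $1$. Granting this control of the coefficients, the eigenvalue estimate closes the argument: the spectral sequence collapses onto its bottom row through degree $n$, and $\inc_\ast\colon H_n(T(A),\z)\xrightarrow{\ \sim\ }H_n(B(A),\z)$.
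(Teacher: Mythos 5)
The paper itself gives no argument for this statement: Proposition \ref{iso-hut} is quoted verbatim from Hutchinson \cite[Proposition 3.19]{hutchinson2017}. So your attempt has to be judged as a reconstruction of that proof. Your skeleton is the right one: the Lyndon--Hochschild--Serre spectral sequence of the split extension $1 \arr N(A) \arr B(A) \arr T(A) \arr 1$, the reduction to the vanishing of $H_s(T(A),H_t(N(A),\z))$ for $1\le t\le n$, the centre-kills principle, and the base-$p$ digit-sum estimate (every positive multiple of $p^d-1$ has digit sum at least $(p-1)d$, while the characters occurring in $H_t$ have exponents that are sums of at most $2t\le 2n$ powers of $p$), which is exactly where the hypothesis $(p-1)d>2n$ enters. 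That computation is correct as far as it goes --- but it goes through only when the acting unit $\tau$ can be chosen of finite order, i.e.\ essentially when $A$ contains a copy of its finite residue field (a Teichm\"uller lift), so that $\tau$ really acts semisimply.

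The genuine gap is that in the remaining cases --- all of which the proposition covers --- the single-element centre-kills method does not merely need more detail; it provably fails. Take $A=F=\F_2(x)$, a local domain whose residue field (namely $F$ itself) is infinite, so the proposition applies with no hypothesis at all. You dismiss this case with ``a generic $\tau$ avoids the finitely many forbidden eigenvalues,'' but there is no finite list of eigenvalues here, and in fact \emph{no} unit works. Every $a\in F^\times\setminus\{1\}$ is transcendental over $\F_2$; put $s=a^2$ and $K=\F_2(s)\subseteq F$. Then $F$ is a $K$-vector space of dimension at least $2$ on which $\mu_s$ (the action of $a$) is $K$-linear, so a choice of $K$-basis $(e_i)$ exhibits a $\mu_s$-stable direct summand $Ke_1\otimes_{\F_2}Ke_2\cong K\otimes_{\F_2}K$ of $H_2(F,\z)=\wedge^2_{\F_2}F$, on which $(\mu_{a^2})_*-1$ acts as multiplication by the ring element $s\otimes s-1$. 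Since $s_1s_2-1$ is irreducible in $\F_2[s_1,s_2]$ and divides no nonzero product $f(s_1)g(s_2)$, it is a non-unit in the localization $K\otimes_{\F_2}K$ of $\F_2[s_1,s_2]$, so this multiplication is not surjective. Hence $(\mu_{a^2})_*-1$ is not invertible on $H_2(F,\z)$ for any choice of $a$, and centre-kills with one element cannot establish the infinite-residue-field case; the known proofs use a Suslin-style argument that exploits infinitely many units at once rather than a single one. The same phenomenon blocks your finite-residue-field argument whenever $A$ admits no Teichm\"uller lift: for instance $A=\F_2[x]_{(f)}$ with $f$ irreducible of degree $d$ has residue field $\F_{2^d}$, yet every unit other than $1$ is transcendental over $\F_2$, so no unit acts semisimply and your character analysis does not apply. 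You flag this, and the mixed-characteristic case, as ``the main obstacle,'' but they are not loose ends --- they are the substance of the proposition for general local domains. As it stands, your proposal proves the statement essentially only for finite fields and algebras over them.
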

\begin{proof}
See \cite[Proposition 3.19]{hutchinson2017}.
\end{proof}

One more fact that is needed in the next section is the structure of the homology group $H_3(T(A),\z)$.

Let $B$ be an abelian group. Let $\sigma_1:\tors(B,B)\arr \tors(B,B)$
be obtained by interchanging  the group $B$. It is not difficult to show that $\sigma_1$ is induced by
the involution $B\otimes B \arr B\otimes B$, $a\otimes b\mapsto -b\otimes a$.

Let $\Sigma_2'=\{1, \sigma'\}$ be the symmetric group of order 2. Consider the following action of $\Sigma_2'$ on $\tors(B,B)$:
\[
(\sigma', x)\mapsto -\sigma_1(x).
\]

\begin{prp}\label{H3B}
For any abelian group $B$ we have the exact sequence
\[
\begin{array}{c}
0 \arr \bigwedge_\z^3 B \arr H_3(B,\z) \arr \tors(B,B)^{\Sigma_2'} \arr 0,
\end{array}
\]
where the right side homomorphism  is obtained from the composition
\[
H_3(B,\z) \overset{{\Delta_B}_\ast}{\larr } H_3(B\oplus B,\z) \arr \tors(B,B),
\]
$\Delta_B$ being the diagonal map $B \arr B\oplus B$, $b \mt (b,b)$.
\end{prp}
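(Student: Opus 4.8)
The plan is to compute $H_3$ of an abelian group via the known natural decomposition of the homology of abelian groups, isolating the torsion contribution. The starting point is the classical description of $H_*(B,\z)$ for $B$ abelian in terms of the divided-power / exterior algebra structure: in low degrees one has $H_1(B,\z)\simeq B$, $H_2(B,\z)\simeq \bigwedge_\z^2 B$, and in degree $3$ a short exact sequence relating $\bigwedge_\z^3 B$, $H_3(B,\z)$, and the torsion group $\tors(B,B)$. Concretely, I would invoke the functorial computation (as in the treatment of the homology of abelian groups via the Pontryagin product and the transpotence map) giving a natural filtration whose associated graded pieces are $\bigwedge_\z^3 B$ and a quotient built from $\tors(B,B)$. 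The first task is therefore to pin down exactly which quotient of $\tors(B,B)$ appears, and I claim it is the invariants $\tors(B,B)^{\Sigma_2'}$ under the twisted involution $x\mapsto -\sigma_1(x)$.

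First I would set up the map on the right of the sequence explicitly. Using the diagonal $\Delta_B\colon B\arr B\oplus B$ and the Künneth formula for $H_3(B\oplus B,\z)$, the group $\tors(B,B)=\tors_1^\z(H_1(B),H_1(B))$ appears as a Künneth summand of $H_3(B\oplus B,\z)$, and composing the induced map ${\Delta_B}_\ast$ with the projection onto this summand defines the homomorphism $H_3(B,\z)\arr \tors(B,B)$. The key step is to identify its image: since $H_3(B,\z)$ is the degree-$3$ part of a graded-commutative structure, the classes coming from the diagonal are forced to be symmetric with respect to the interchange of the two factors of $B$, and the sign $-1$ arises from the odd homological degree. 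This is precisely the condition defining the $\Sigma_2'$-action $(\sigma',x)\mapsto -\sigma_1(x)$, so the image lands in (and, by a dimension/naturality count, exactly fills out) $\tors(B,B)^{\Sigma_2'}$.

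Next I would establish exactness. Surjectivity onto $\tors(B,B)^{\Sigma_2'}$ follows once the image is identified as above, by exhibiting enough classes: for a torsion element of order $n$ one builds an explicit $3$-cycle in the bar complex of $B$ (essentially the standard generator witnessing the $\tors$ class) whose image under the composite is the prescribed invariant element. The kernel is then identified with $\bigwedge_\z^3 B$ via the edge map coming from the exterior-algebra part of $H_*(B,\z)$; injectivity of $\bigwedge_\z^3 B\arr H_3(B,\z)$ is the statement that the top exterior power injects, which holds for the homology of abelian groups and can be checked on the associated graded of the filtration. I expect the naturality/functoriality reduction to be useful here: both sides commute with filtered colimits, so it suffices to treat finitely generated $B$, and then by the structure theorem one reduces to $B$ free and $B$ cyclic, where everything can be computed by hand and via Künneth.

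The main obstacle will be the precise identification of the invariant subgroup $\tors(B,B)^{\Sigma_2'}$ as the correct target, i.e. getting the signs and the twist $x\mapsto -\sigma_1(x)$ exactly right rather than, say, the coinvariants or the untwisted invariants. This requires a careful bookkeeping of the two independent symmetries at play: the geometric interchange of the two copies of $B$ in $B\oplus B$ (inducing $\sigma_1$), and the Koszul sign from permuting classes of homological degree in the graded-commutative product on $H_*(B\oplus B,\z)$. Reconciling these two sign conventions in the Künneth isomorphism, and verifying that the diagonal-induced classes are exactly the fixed points of their composite, is the delicate part; once that is settled, exactness is a matter of comparing the ranks/orders on the generating cases and invoking naturality.
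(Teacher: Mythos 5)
Your outline is correct, but note that the paper itself does not prove Proposition \ref{H3B}: its ``proof'' is a citation to Suslin's Lemma~5.5 and to Breen, and what you have sketched is essentially a reconstruction of Suslin's argument (both sides of the sequence commute with filtered colimits, so reduce to finitely generated $B$, then to free and cyclic factors via the structure theorem and K\"unneth). So your route is the standard one; what the citation buys the paper is precisely the two computations that remain thin in your sketch and that carry all the content. First, the inductive step over direct sums: the sequence does not split naturally, so you cannot treat summands separately; you must check that in the K\"unneth decomposition
\[
H_3(B_1\oplus B_2,\z)\simeq H_3(B_1,\z)\oplus H_3(B_2,\z)\oplus\Big(\textstyle\bigwedge_\z^2 B_1\otimes B_2\Big)\oplus\Big(B_1\otimes\textstyle\bigwedge_\z^2 B_2\Big)\oplus\tors(B_1,B_2)
\]
the two middle cross terms are exactly the complement of $\bigwedge_\z^3 B_1\oplus\bigwedge_\z^3 B_2$ inside $\bigwedge_\z^3(B_1\oplus B_2)$ (via the Pontryagin product), while $\tors(B_1,B_2)$ maps isomorphically onto the off-diagonal part of $\tors(B_1\oplus B_2,B_1\oplus B_2)^{\Sigma_2'}$, compatibly with the map induced by the diagonal; this matching is where the twisted invariants, rather than coinvariants or untwisted invariants, are forced. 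Second, the cyclic base case is not a ``dimension/naturality count'': you must prove that $\sigma_1=-\id$ on $\tors(\z/n,\z/n)$ (so that the $\Sigma_2'$-action is trivial there and the invariants are all of $\z/n$), and that the composite $H_3(\z/n,\z)\arr H_3(\z/n\oplus\z/n,\z)\arr\tors(\z/n,\z/n)$ is surjective; this is an honest computation with explicit resolutions, not a formal consequence of symmetry. With those two points supplied, your proof is complete and agrees with the one in the sources the paper cites.
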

\begin{proof}
See \cite[Lemma~5.5]{suslin1991}, \cite[Section 6]{breen1999}.
\end{proof}

\section{The refined bloch-wigner exact sequence in characteristic 2}

Let $A$ be a ring which satisfies the following  three conditions:
\bigskip

\par\ \ \  (1) $\mu_2(A)=1$,
\par \ \ \ (2) $X_\bullet(A^2) \arr \z$ is exact in dimension $<4$,
\par \ \ \ (3) $H_n(T(A),\z)\simeq H_n(B(A),\z)$ for $n=2,3$.
\bigskip

Note that if $\mu_2(A)=1$, then $A$ is of characteristic $2$. The main example that we should keep in our mind is a
domain of characteristic $2$ such that its residue field has more than 64 elements (see Propositions \ref{GE2C} and \ref{iso-hut}).
%
Here is our main result.

\begin{thm}\label{char=2}
Let $A$ be a ring which satisfies in the conditions {\rm (1)},  {\rm (2)}  and {\rm (3)}.  
Then we have the refined Bloch-Wigner exact sequence
\[
\tors(\mu(A),\mu(A))^{\Sigma_2'} \arr H_3(\SL_2(A),\z) \arr \RB(A) \arr 0.
\]
Moreover if $A$ is a domain, then we have the exact sequence
\[
0 \arr \tors(\mu(A),\mu(A)) \arr H_3(\SL_2(A),\z) \arr \RB(A) \arr 0.
\]
\end{thm}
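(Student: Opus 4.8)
The plan is to extract the exact sequence by analyzing the spectral sequence $E^\bullet_{p,q}$ in total degree $3$, using the machinery assembled in the preceding sections. Under the three hypotheses we have already identified the relevant entries: $E^2_{2,1}\simeq\RP_1(A)$ (since $\mu_2(A)=1$ kills the $E^1_{2,1}\to E^2_{2,1}$ correction), and by condition (3) the edge entries $E^1_{0,q}\simeq H_q(B(A),\z)\simeq H_q(T(A),\z)$ for $q=2,3$ are computable via Proposition \ref{H3B}. First I would assemble the three contributions to $H_3(\SL_2(A),\z)$ along the total-degree-3 antidiagonal, namely $E^\infty_{0,3}$, $E^\infty_{1,2}$, and $E^\infty_{2,1}$, and track which differentials are responsible for cutting them down from the $E^2$ page.

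The key computational input is the differential $d^2_{2,1}\colon\RP_1(A)\simeq E^2_{2,1}\to E^2_{0,2}\simeq H_2(B(A),\z)\simeq(\aa\wedge\aa)\oplus\SS_2$. By hypothesis (3) the summand $\SS_2$ vanishes, so $d^2_{2,1}$ lands in $\aa\wedge\aa\simeq H_2(T(A),\z)$, and Lemma \ref{ker=RB} identifies its kernel precisely with $\RB(A)$. This is exactly the mechanism that produces the surjection $H_3(\SL_2(A),\z)\two\RB(A)$: the surviving part of $E^2_{2,1}$ on the $E^3=E^\infty$ page is $\ker(d^2_{2,1})\simeq\RB(A)$, and this sits as a subquotient of $H_3$. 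Next I would analyze the remaining antidiagonal entries. The entry $E^2_{1,2}$ comes from $H_2(T(A),\z)$ and I expect its contribution to be absorbed or to vanish after running the differentials $d^1_{1,2}$ (already noted to be trivial at the $E^1$ stage via the $X\mapsto X^{-2}$ analysis in degree $1$) and $d^2$; the entry $E^2_{0,3}\simeq H_3(B(A),\z)\simeq H_3(T(A),\z)$ decomposes by Proposition \ref{H3B} into $\bigwedge^3\aa$ and a torsion piece $\tors(\aa,\aa)^{\Sigma_2'}$. The torsion part is what must survive to give the kernel term $\tors(\mu(A),\mu(A))^{\Sigma_2'}$, since the torsion subgroup of $\aa$ is exactly $\mu(A)$; the free part $\bigwedge^3\aa$ must be killed, presumably by an incoming differential or by comparison with the stable/diagonal computation.

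The hard part will be controlling the left-hand term: showing that the contribution of $E^\infty_{0,3}$ reduces exactly to $\tors(\mu(A),\mu(A))^{\Sigma_2'}$ and that it injects as the kernel of $H_3(\SL_2(A),\z)\to\RB(A)$. This requires (i) identifying which differentials hit $E^2_{0,3}$ to eliminate the $\bigwedge^3\aa$ summand and any spurious part, and (ii) verifying that the extension problem assembling $E^\infty_{0,3}$, $E^\infty_{1,2}$, $E^\infty_{2,1}$ into $H_3$ splits off $\RB(A)$ cleanly on the right. For the domain refinement I would invoke the structure of the $\Sigma_2'$-action on $\tors(\mu(A),\mu(A))$: over a domain of characteristic $2$ one has $\mu(A)$ of odd order (as $-1=1$), so $\tors(\mu(A),\mu(A))$ is $2$-divisible-free in the relevant sense and the involution $\sigma'$ given by $x\mapsto-\sigma_1(x)$ acts so that the coinvariants/invariants computation yields $\tors(\mu(A),\mu(A))^{\Sigma_2'}\simeq\tors(\mu(A),\mu(A))$, upgrading the left map to an injection and giving the $0\to\tors(\mu(A),\mu(A))$ beginning. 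I would carry out the general (three-term) statement first, then specialize, since the domain hypothesis only enters at the final identification of the invariants with the full torsion group.
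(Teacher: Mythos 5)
Your outline reproduces the paper's skeleton --- analyze the total-degree-$3$ antidiagonal of the spectral sequence and use Lemma \ref{ker=RB} to get $E^{\infty}_{2,1}\simeq E^3_{2,1}\simeq\RB(A)$ --- but the two places where you write ``I expect'' and ``presumably'' are exactly where the substance of the proof lies, and the formal structure alone cannot supply them. Since $d^1_{1,2}$ is trivial, $E^2_{1,2}$ is the \emph{cokernel} of $d^1_{2,2}\colon H_2(\SL_2(A),Z_1(A^2))\to H_2(T(A),\z)\simeq\aa\wedge\aa$; nothing absorbs it for free, so you must prove that this differential is surjective. The paper does this by constructing explicit cycles $\lambda(a,b)$ in $B_\bullet(\SL_2(A))\otimes_{\SL_2(A)}Z_1(A^2)$ and chasing them through the bar/standard resolution comparison and a set-theoretic section of $\SL_2(A)\to\SL_2(A)\backslash T(A)$, ending with $d^1_{2,2}(\lambda(a,b))=[a|b]-[b|a]$, i.e.\ the class of $a\wedge b$. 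Similarly for $E_{0,3}$: after noting that $d^1_{1,3}$ restricted to $\bigwedge_\z^3\aa$ is multiplication by $2$, the residual summand (the image of $\bigwedge_\z^3\aa/2$ in $E^2_{0,3}$) must be killed by the only remaining incoming differential $d^2_{2,2}$ --- there is no $d^3$ into $(0,3)$ because the underlying complex has only three columns --- and the paper proves $d^2_{2,2}\big(\lambda(ab,c)-\lambda(a,c)-\lambda(b,c)\big)=-a\wedge b\wedge c$ by a long explicit chain-level computation. Without these two computations you cannot rule out $E^{\infty}_{1,2}\neq 0$ (which would break the surjection onto $\RB(A)$) or a surviving copy of $\bigwedge_\z^3\aa/2$ contaminating the kernel term; also note that the general statement only asserts that the image of $\tors(\mu(A),\mu(A))^{\Sigma_2'}$ equals the kernel, not that the torsion ``injects,'' so your aim in step (ii) is stronger than what the spectral sequence gives.

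The domain refinement has a more fundamental flaw. The identification $\tors(\mu(A),\mu(A))^{\Sigma_2'}=\tors(\mu(A),\mu(A))$ holds because $\mu(A)$ is a direct limit of finite \emph{cyclic} groups (a consequence of $A$ being a domain), not because $\mu(A)$ has odd torsion --- for a group of odd exponent that is not locally cyclic, such as $C_n\oplus C_n$, the $\Sigma_2'$-invariants can be a proper subgroup. More importantly, identifying the group on the left does nothing to make the map $\tors(\mu(A),\mu(A))\to H_3(\SL_2(A),\z)$ injective: the spectral sequence only produces a surjection $\tors(\mu(A),\mu(A))^{\Sigma_2'}\two E^3_{0,3}$, so this map can a priori have a kernel. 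The paper's injectivity argument is external to the spectral sequence: it maps the whole sequence to the classical Bloch--Wigner exact sequence over $\overline{F}$, the algebraic closure of the fraction field of $A$ (where $\RB(\overline{F})=\BB(\overline{F})$ and the classical sequence begins with $0$), and concludes by a diagram chase using the injectivity of $\tors(\mu(A),\mu(A))\to\tors(\mu(\overline{F}),\mu(\overline{F}))$. Your proposal contains no substitute for this comparison step, so the second exact sequence of the theorem remains unproved.
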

\begin{proof}
By Lemma \ref{ker=RB}, $E^\infty_{2,1} \simeq E^3_{2,1} \simeq \RB(A)$.  We show that the differential
\[
d_{2,2}^1: H_2(\SL_2(A),Z_1(A^2))\arr \aa\wedge\aa
\]
is surjective. For $a\in A^{\times}$, denote  $(\infty, {\bf 0}, {\bf a})\in X_2(A^2)$ 
by $X_a$. 
Let $Y=(\infty,{\bf 0})+({\bf 0},\infty)\in Z_1(A^2)$. For $a,b\in \aa$, let 
\[
\lambda(a,b)\in H_2(\SL_2(A),Z_1(A))=H_2(B_\bullet(\SL_2(A))\otimes_{\SL_2(A)} Z_1(A))
\]
 be the element
\begin{align*}
\lambda(a,b):=&([a|b]+[w|ab]-[w|a]-[w|b])\otimes Y+[wab|wab]\otimes \partial_2(X_{ab})\\
&-[wa|wa]\otimes \partial_2(X_a) - [wb|wb]\otimes \partial_2(X_b) + [w|w]\otimes \partial_2(X_1).
\end{align*}
Recall that $w=\mtxx{0}{1}{-1}{0}=\mtxx{0}{1}{1}{0}$. We have	
\begin{align*}
d^1_{2,2}(\lambda(a,b))=&(w+1)([a|b]+[w|ab]-[w|a]-[w|b])\otimes (\infty,{\bf 0})\\
&+(g_{ab}^{-1}-h_{ab}^{-1}+1)([wab|wab])\otimes (\infty,{\bf 0})\\
&-(g_{a}^{-1}-h_{a}^{-1}+1)([wa|wa])\otimes (\infty, {\bf 0})\\
&-(g_{b}^{-1}-h_{b}^{-1}+1)([wb|wb])\otimes (\infty, {\bf 0})\\
&+(g_1^{-1}-h_1^{-1}+1)([w|w])\otimes (\infty, {\bf 0}),
\end{align*}
where 
$g_x:=\begin{pmatrix}
0 & 1\\
1 & x
\end{pmatrix}$ and 
$h_x:=\begin{pmatrix}
1 & x^{-1}\\
0 & 1
\end{pmatrix}$. 
This element is in $H_2(\SL_2(A),X_1(A^2))=H_2(B_\bullet(\SL_2(A))\otimes_{\SL_2(A)} X_1(A))$. The morphisms
\[
B_\bullet(\SL_2(A))\otimes_{\SL_2(A)} X_1(A) \arr B_\bullet(\SL_2(A))\otimes_{T(A)} \z \arr C_\bullet(\SL_2(A))\otimes_{T(A)} \z,
\]
\[
[g_1|\cdots|g_n]\otimes (\infty, {\bf 0}) \mt [g_1|\cdots|g_n]\otimes 1 \mt \otimes (1, g_1, \dots, g_1\cdots g_n)\otimes 1,
\]
induce the isomorphisms
\[
H_2(B_\bullet(\SL_2(A))\otimes_{\SL_2(A)} X_1(A)) \simeq H_2(B_\bullet(\SL_2(A))\otimes_{T(A)} \z)\simeq  H_2(C_\bullet(\SL_2(A))\otimes_{T(A)} \z).
\]
Following these maps we see that $d^1_{2,2}(\lambda(a,b))$ as an element of $H_2(C_\bullet(\SL_2(A))\otimes_{T(A)} \z)$ find the following form
\begin{align*}
d^1_{2,2}(\lambda(a,b))=&\bigg((w,wa,wab)+(w,1,ab)-(w,1,a)-(w,1,b)\\
&+(1,a,ab)+(1,w,wab)+(1,w,wa)+(1,w,wb))\otimes 1\\
&+((g^{-1}_{ab},g^{-1}_{ab}wab,g^{-1}_{ab})-(h^{-1}_{ab},h^{-1}_{ab}wab,h^{-1}_{ab})+(1,wab,1))\otimes 1\\
&-((g^{-1}_{a},g^{-1}_{a}wa,g^{-1}_{a})-(h^{-1}_{a},h^{-1}_{a}wa,h^{-1}_{a})+(1,wa,1))\otimes 1\\
&-((g^{-1}_{b},g^{-1}_{b}wb,g^{-1}_{b}))-(h^{-1}_{b},h^{-1}_{b}wb,h^{-1}_{b})+(1,wb,1)\otimes 1\\
&+((g^{-1}_1,g^{-1}_1w,g^{-1}_1)-(h^{-1}_1,h^{-1}_1w,h^{-1}_1)+(1,w,1)\bigg)\otimes 1.
\end{align*}
Now we want to find a representative of this element in $C_\bullet(T(A))\otimes_{T(A)}\z$ by the isomorphism
\[
H_2(C_\bullet(\SL_2(A))\otimes_{T(A)} \z))\simeq H_2(C_\bullet(\aa))\otimes_{\aa} \z).
\]
Let  $s:\SL_2(A)\backslash T(A)\to \SL_2(A)$  be any (set-theoretic) section of the canonical projection 
$\pi:\SL_2(A)\to \SL_2(A)\backslash T(A)$. For $g\in \SL_2(A)$, set $\overline{g}=(g)(s\circ \pi(g))^{-1}$. Then the morphism
\[
C_\bullet(\SL_2(A))\otimes_{T(A)} \z \overset{s_\bullet}{\larr} C_\bullet(T(A))\otimes_{T(A)}\z, \ \ \ 
[g_1|\dots|g_n]\otimes 1 \mt [\overline{g_1}|\dots|\overline{g_n}]\otimes 1
\]
induces the desired isomorphism. By choosing the section
\[
s(T(A)\begin{pmatrix}
a&b\\
c&d
\end{pmatrix})
=\begin{cases}	
\begin{pmatrix}
1&a^{-1}b\\
ac&ad
\end{pmatrix} & \text{if  $a\neq 0$}\\
\\
\begin{pmatrix}
0&1\\
1&bd
\end{pmatrix} &\text{if $a=0$}
\end{cases}
\]
We see that $d^1_{2,2}(\lambda(a,b))$ in $H_2(C_\bullet(T(A))\otimes_{T(A)}\z)=H_2(C_\bullet(\aa)\otimes_{\aa}\z)$ is of the following form
 \begin{align*}
d^1_{2,2}(\lambda(a,b))=&\bigg((1,a^{-1},(ab)^{-1})+(1,a,ab)+(1,(ab)^{-1},1)-(1,a^{-1},1)-(1,b^{-1},1)\bigg)\otimes 1.
\end{align*}
In $H_2(B_{\bullet}(\aa)\otimes_{\aa} \z)$ this elements corresponds to
\begin{align*}
d^1_{2,2}(\lambda(a,b))=&\bigg([a^{-1}|b^{-1}] +[a|b]+[a^{-1}b^{-1}|ab]-[a^{-1}|a]-[b^{-1}|b]\bigg)\otimes 1.
\end{align*}
Now by adding the following null element in $H_2(A^{\times},\z)=H_2(B_{\bullet}(\aa)\otimes_{\aa} \z)$
\[
d_3([a^{-1}|b^{-1}|ab]-[b^{-1}|b|a])\otimes 1,
\]
it is easy to see that
\[
d^1_{2,2}(\lambda(a,b))=\Big([a|b]-[b|a]\Big)\otimes 1 \in H_2(B_{\bullet}(\aa)\otimes_{\aa} \z).
\]
This shows that $d^1_{2,2}$ is surjective. Therefore $E_{1,2}^2=0$.

Now we need to study $E_{0,3}^\infty=E_{0,3}^3$. To do this, first consider the differential 
\[
d_{1,3}^1=H_3(\sigma)-H_3(\inc): H_3(T(A),\z) \to H_3(B(A),\z)\simeq H_3(T(A),\z) . 
\]
By Proposition \ref{H3B}, we have the exact sequence
\[
\begin{array}{c}
0 \arr \bigwedge_\z^3 \aa \arr H_3(T(A),\z) \arr \tors (\mu(A), \mu(A))^{\Sigma_2'} \arr 0.
\end{array}
\]
It is straightforward to check  that $d_{1,3}^1\mid_{\bigwedge_\z^ 3 \aa}$ coincides with multiplication by $2$. 
Thus we have the exact sequence
\[
\begin{array}{c}
\bigwedge_\z^3 \aa/2 \arr E_{0,3}^2 \arr \tors(\mu(A), \mu(A))^{\Sigma_2'}/\TT \arr 0,
\end{array}
\]
for some  subgroup $\TT$ of $\tors(\mu(A), \mu(A))^{\Sigma_2'}$.
By an easy analysis of the spectral sequence we have the exact sequence
\[
E_{0,3}^3 \arr H_3(\SL_2(A),\z) \arr \RB(A) \arr 0.
\]
We denote the image of $a\wedge b\wedge c \in \bigwedge_\z^3 \aa/2$ in $E_{0,3}^ 2$ again by $a\wedge b\wedge c$.
Since $d_{2,2}^1(\lambda(ab,c)-\lambda(a,c)-\lambda(b,c))=0$, we have $\lambda(ab,c)-\lambda(a,c)-\lambda(b,c) \in E_{2,2}^2$.
We show that 
\begin{equation}\label{d3}
d_{2,2}^2(\lambda(ab,c)-\lambda(a,c)-\lambda(b,c))= -a\wedge b\wedge c
\in E_{0,3}^2.
\end{equation}
This would imply that there is a surjective map $ \tors (\mu(A), \mu(A))^{\Sigma_2'} \two E_{0,3}^3$ and therefore we obtain the exact sequence
\[
 \tors (\mu(A), \mu(A))^{\Sigma_2'}  \arr H_3(\SL_2(A),\z) \arr \RB(A) \arr 0,
\]
which proof the first claim of the theorem.
Now we prove the equality (\ref{d3}). 

Consider the diagram
\[
\begin{tikzcd}
B_3(\SL_2(A))\otimes
X_0(A^ 2) & B_3(\SL_2(A))\otimes
X_1(A^ 2) \ar["\id_{B_3}\otimes \partial_1"',  l] \ar[d, "d_3\otimes \id_{X_1}"] & \\
& B_2(\SL_2(A))\otimes
X_0(A^ 2) & B_2(\SL_2(A))\otimes  Z_1(A^ 2).\ar[l, "\id_{B_2}\otimes \inc"']
\end{tikzcd}
\]
The element $\Lambda(a,b,c):=\lambda(ab,c)-\lambda(a,c)-\lambda(b,c)$ is
\begin{align*}
\Lambda(a,b, c)&\!:=\!\Big([ab|c]\!-\![a|c]\!-\![b|c]\!+\![w|abc]\!-\![w|ac]\!-\![w|bc]\!-\![w|ab]\!+\![w|a]\!+\![w|b]\!+[w|c]\Big)\!\otimes\! Y\\
&+[wabc|wabc]\otimes \partial_2(X_{abc})-[wab|wab]\otimes \partial_2(X_{ab})-[wbc|wbc]\otimes \partial_2(X_{bc})\\
&-[wac|wac]\otimes \partial_2(X_{ac})+[wa|wa]\otimes \partial_2(X_a) + [wb|wb]\otimes \partial_2(X_b) \\
&+ [wc|wc]\otimes \partial_2(X_c) - [w|w]\otimes \partial_2(X_1).
\end{align*}
For an element $z\in \aa$, consider $[wz|wz]\otimes \partial_2(X_z)\in B_2(\SL_2(A))\otimes_{\SL_2(A)} Z_1(A^ 2)$. 
For the matrices $g_z$ and $h_z$ we have the identities
\[
\begin{array}{cccc}
z^{-1}g_{z^{-1}}=g_zz, & zh_z=h_{z^{-1}}z, & g^{-1}_z w=h_{z^{-1}}, & h^{-1}_z=h_z.
\end{array}
\]
Using these identities we obtain
\begin{align*}
[wz|wz]\otimes \partial_2(X_z)&\!=\![wz|wz]\!\otimes\!(\infty,0)\\
&\!\!\!\!\!\!\!\!\!+\!(d_3\!\otimes\!\id_{X_1}\!)(([g^{-1}_z|wz|wz]\!-\![h_z|wz|wz]\!+\![z^{-1}|g^{-1}_z|wz]\!-\![z|h_z|wz])\!\otimes\!(\infty,{\bf 0}))\\
&\!\!\!\!\!\!\!\!\!+(d_3\otimes\id_{X_1})(([z|z^{-1}|g_z]-[z^{-1}|z|h_z]+[z^{-1}|z|z^{-1}])\otimes(\infty,0)).
\end{align*}
If
\begin{align*}
\theta_z&:=\![g^{-1}_z|wz|wz]\!-\![h_z|wz|wz]\!+\![z^{-1}|g^{-1}_z|wz]\!-\![z|h_z|wz]\\
&+\![z|z^{-1}|g_z]\!-\![z^{-1}|z|h_z]\!+\![z^{-1}|z|z^{-1}],
\end{align*}
then by a direct calculation we have
\[
\Lambda(a,b,c)=(d_3\otimes\id_{X_1})(([w|ab|c]-[w|a|c]-[w|b|c])\otimes Y+(\Phi_{a,b, c}+\Psi_{a,b, c})\otimes (\infty,{\bf 0})),
\]
where
\begin{align*}
\Phi_{a,b, c}&=\theta_{abc}-\theta_{ab}-\theta_{bc}-\theta_{ac}+\theta_{a}+\theta_{b}+\theta_{c}-\theta_{1}\\
\Psi_{a,b,c}&=[wab|wab|c]-[wa|wa|c]-[wb|wb|c]+[w|w|c]\\
&+[c|wabc|wabc]-[c|wac|wac]-[c|wbc|wbc]+[c|wc|wc]\\
&+[ab|wab|c]-[a|wa|c]-[a|c|wac]+[ab|c|wabc]-[c|ab|wabc]+[c|a|wac]\\
&-[b|wb|c]-[b|c|wbc]+[c|b|wbc]-[b|a|c]+[b|c|a]-[c|b|a].
\end{align*}
Since $\partial_1(Y)=0$, we need only to study
\[
(\id_{B_3}\otimes \partial_1)((\Phi_{a,b, c}+\Psi_{a,b, c})\otimes (\infty,{\bf 0})).
\]
Through the maps
\[
B_3(\SL_2(A))\otimes_{\SL_2(A)} X_1(A^2) \overset{\id_{B_3}\otimes \partial_1}{\larr} B_3(\SL_2(A))\otimes_{\SL_2(A)} X_0(A^2) 
\]
the above element maps to
\[
(w\Phi_{a,b, c}-\Phi_{a,b, c})\otimes (\infty)+ (w\Psi_{a,b, c}-\Psi_{a,b, c})\otimes (\infty).
\]
Now consider the composite
\[
B_3(\SL_2(A))\otimes_{\SL_2(A)} X_0(A^2) \arr C_3(\SL_2(A))\otimes_{\SL_2(A)} X_0(A^2) \arr C_3(\SL_2(A))\otimes_{B(T)} \z.
\]
Then
\begin{align*}
(w\theta_z-\theta_z)\otimes (\infty)\mt&\bigg\{(w,wg^{-1}_z,wg^{-1}_zwz,wg^{-1}_z)-(w,wh_z,wh_zwz,wh_z)\\
&+(w,wz^{-1},wz^{-1}g^{-1}_zwz)-(w,wz,wzh_z,wzh_zwz)\\
&+(w,wz,w,wg^{-1}_z)-(w,wz^{-1},w,wh_z)\\
&+(w,wz^{-1},w,wz^{-1})-(1,g^{-1}_z,g^{-1}_zwz,g^{-1}_z)\\
&+(1,h_z,h_zwz,h_z)-(1,z^{-1},z^{-1}g^{-1}_z,z^{-1}g^{-1}_zwz)\\
&+(1,z,zh_z,zh_zwz)-(1,z,1,g^{-1}_z)\\
&+(1,z^{-1},1,h_z)-(1,z^{-1},1,z^{-1})\bigg\}\otimes 1
\end{align*}
and
\begin{align*}
(w\Psi_{a,b, c}-\Psi_{a,b, c})\otimes (\infty)\mt &\bigg\{(w,ab,w,wc)-(w,a,w,wc)-(w,b,w,wc)+(w,1,w,wc)\\
&+(w,wc,ab,wc)-(w,wc,a,wc)-(w,wc,b,wc)+(w,wc,1,wc)\\
&+(w,wab,1,c)-(w,wa,1,c)-(w,wa,wac,1)+(w,wab,wabc,1)\\
&-(w,wc,wabc,1)+(w,wc,wac,1)-(w,wb,1,c)-(w,wb,wbc,1)\\
&+(w,wc,wbc,1)-(w,wb,wab,wabc)+(w,wb,wbc,wabc)\\
&-(w,wc,wbc,wabc)-(1,wab,1,c)+(1,wa,1,c)+(1,wb,1,c)\\
&-(1,w,1,c)-(1,c,wab,c)+(1,c,wa,c)+(1,c,wb,c)-(1,c,w,c)\\
&-(1,ab,w,wc)+(1,a,w,wc)+(1,a,ac,w)-(1,ab,abc,w)\\
&+(1,c,abc,w)-(1,c,ac,w)+(1,b,w,wc)+(1,b,bc,w)\\
&-(1,c,bc,w)+(1,b,ab,abc)-(1,b,bc,abc)+(1,c,bc,abc)\bigg\}\otimes 1.
\end{align*}
Now we want to follow these elements through  the maps
\[
C_3(\SL_2(A))\otimes_{B(A)} \z \overset{s}{\larr}  C_3(B(A))\otimes_{B(A)} 
\z \arr C_3(T(A))\otimes_{T(A)} \z \arr B_3(T(A))\otimes_{T(A)} \z
\]
where $s:\SL_2(A)\backslash T(A)\to \SL_2(A)$ is the section discussed in above.
It is straightforward to check that modulo $\text{im}(d_4)$ we have
\[
(w\theta_z-\theta_z)\otimes(\infty)\mt -[z^{-1}|z|z^{-1}]\otimes 1=[z|z^{-1}|z]\otimes 1
\]	
Moreover
\begin{align*}
(w\Psi_{a,b, c}-\Psi_{a,b, c})\otimes (\infty)\mt &\bigg\{[c^{-1}|abc|(abc)^{-1}]-[c^{-1}|ac|(ac)^{-1}]-[c^{-1}|bc|(bc)^{-1}]+[c^{-1}|c|c^{-1}]\\
&-[a^{-1}|c^{-1}|ac]+[(ab)^{-1}|c^{-1}|abc]-[c^{-1}|(ab)^{-1}|abc]+[c^{-1}|a^{-1}|ac]\\
&-[b^{-1}|c^{-1}|bc]+[c^{-1}|b^{-1}|bc]-[b^{-1}|a^{-1}|c^{-1}]+[b^{-1}|c^{-1}|a^{-1}]\\
&-[c^{-1}|b^{-1}|a^{-1}]-[c|(abc)^{-1}|abc]+[c|(ac)^{-1}|ac]+[c|(bc)^{-1}|bc]\\
&-[c|c^{-1}|c]+[a|c|(ac)^{-1}]-[ab|c|(abc)^{-1}]+[c|ab|(abc)^{-1}]\\
&-[c|a|(ac)^{-1}]+[b|c|(bc)^{-1}]-[c|b|(bc)^{-1}]+[b|a|c]-[b|c|a]\\
&+[c|b|a]\bigg\}\otimes 1.
\end{align*}
Combining all these we see that $d_{2,2}^2(\Lambda(a,b,c))$ is the following element of $E_{0,3}^2$
\begin{align*}
d_{2,2}^2(\Lambda(a,b,c))=&\bigg\{[c^{-1}|abc|(abc)^{-1}]-[c^{-1}|ac|(ac)^{-1}]-[c^{-1}|bc|(bc)^{-1}]+[c^{-1}|c|c^{-1}]\\
&-[a^{-1}|c^{-1}|ac]+[(ab)^{-1}|c^{-1}|abc]-[c^{-1}|(ab)^{-1}|abc]+[c^{-1}|a^{-1}|ac]\\
&-[b^{-1}|c^{-1}|bc]+[c^{-1}|b^{-1}|bc]-[b^{-1}|a^{-1}|c^{-1}]+[b^{-1}|c^{-1}|a^{-1}]\\
&-[c^{-1}|b^{-1}|a^{-1}]-[c|(abc)^{-1}|abc]+[c|(ac)^{-1}|ac]+[c|(bc)^{-1}|bc]\\
&+[a|c|(ac)^{-1}]-[ab|c|(abc)^{-1}]+[c|ab|(abc)^{-1}]-[c|a|(ac)^{-1}]\\
&+[b|c|(bc)^{-1}]-[c|b|(bc)^{-1}]+[b|a|c]-[b|c|a]+[c|b|a]\\
&+[abc|(abc)^{-1}|abc]-[ab|(ab)^{-1}|ab]-[bc|(bc)^{-1}|bc]\\
&-[ac|(ac)^{-1}|ac]+[a|a^{-1}|a]+[b|b^{-1}|b]\bigg\}\otimes 1
\end{align*}
By adding the null element
\begin{align*}
{}&d_4\bigg(\bigg\{-[c|c^{-1}|abc|(abc)^{-1}]+[c|c^{-1}|ac|(ac)^{-1}]+[c|c^{-1}|bc|(bc)^{-1}]-[c|c^{-1}|a^{-1}|ac]\\
&-[c|c^{-1}|c|c^{-1}]-[c|c^{-1}|b^{-1}|bc]+[c^{-1}|c|(abc)^{-1}|abc]+[ac|a^{-1}|c^{-1}|ac]\\
&-[abc|(ab)^{-1}|c^{-1}|abc]+[bc|b^{-1}|c^{-1}|bc]+[c|ab|(ab)^{-1}|ab]-[c|b|b^{-1}|b]-[c|a|a^{-1}|a]\\
&-[c|a|b|(ab)^{-1}]-[a|b|c|(abc)^{-1}]+[abc|b^{-1}|a^{-1}|c^{-1}]-[abc|b^{-1}|c^{-1}|a^{-1}]\\
&+[a|c|c^{-1}|a^{-1}]-[a|bc|b^{-1}|c^{-1}]+[a|c|b|b^{-1}]+[ac|b|b^{-1}|a^{-1}]-[a|bc|(bc)^{-1}|a^{-1}]\\
&-[b|c|(bc)^{-1}|a^{-1}]+[c|c^{-1}|b^{-1}|a^{-1}]-[c|c^{-1}|c|(abc)^{-1}]\bigg\}\otimes 1 \bigg)
\end{align*}
we see that, modulo $\text{im}(d_4)$,
\begin{align*}
d_{2,2}^2(\Lambda(a,b,c)) &=-([a|b|c]+[c|a|b]+[b|c|a]-[b|a|c]-[c|b|a]-[a|c|b])\otimes 1\\
&=-a\wedge b\wedge c.
\end{align*}
Thus we obtain the desired exact sequence
\[
\tors (\mu(A), \mu(A))^{\Sigma_2'}  \arr H_3(\SL_2(A),\z) \arr \RB(A) \arr 0.
\]
Now let $A$ be a domain. Since $\mu(A)$ is direct limit of finite cyclic  groups, then 
\[
\tors (\mu(A), \mu(A))^{\Sigma_2'} =\tors (\mu(A), \mu(A)).
\]
Let $F$ be the quotient field of $A$ and $\overline{F}$ the algebraic closure of $F$. It is very easy to 
see that $\RB(\overline{F})=\BB(\overline{F})$. The classical Bloch-Wigner exact sequence claims that the sequence
\[
0\arr \tors (\mu(\overline{F}), \mu(\overline{F}))  \arr H_3(\SL_2(\overline{F}),\z) \arr \BB(\overline{F}) \arr 0
\]
is exact. Now the final claim follows from the commutative diagram with exact rows
\[
\begin{tikzcd}
 &\tors (\mu(A), \mu(A))  \ar[r] \ar[d, hook] & H_3(\SL_2(A),\z) \ar[r] \ar[d] & \mathcal{RB}(A) \ar[r] \ar[d] & 0\\
	0 \ar[r] & \tors (\mu(\overline{F}), \mu(\overline{F}))  \ar[r] & H_3(\SL_2(\overline{F}),\z) \ar[r] & \BB(\overline{F}) \ar[r] & 0
\end{tikzcd}
\]
and the fact that the natural map $\tors (\mu(A), \mu(A)) \arr \tors (\mu(\overline{F}), \mu(\overline{F})) $ is injective.
\end{proof}

\begin{cor}
Let $A$ be a local domain of characteristic $2$, where its residue field has more than  $2^6$ elements. Then we have
 the refined Bloch-Wigner exact sequence
\[
0\arr \tors(\mu(A), \mu(A))  \arr H_3(\SL_2(A),\z) \arr \RB(A) \arr 0.
\]
\end{cor}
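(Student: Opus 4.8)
The plan is to deduce the corollary directly from Theorem \ref{char=2}: I would check that a local domain $A$ of characteristic $2$ with residue cardinality $|A/\mmm_A| > 64$ satisfies the three hypotheses (1), (2), (3), and then invoke the domain case of that theorem. Since the substantive work is already contained in the theorem, this is a pure verification, and the only point that demands any arithmetic is condition (3).

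First I would verify (1), that $\mu_2(A)=1$. If $b \in \aa$ satisfies $b^2=1$, then in characteristic $2$ we have $(b-1)^2 = b^2 - 1 = 0$, and since $A$ is a domain this forces $b=1$. Next, for (2), I note that $A$ is local with $k := |A/\mmm_A| > 64$, so Proposition \ref{GE2C} gives exactness of $X_\bullet(A^2) \arr \z$ in every dimension $< k$; as $k > 64 > 4$, this in particular covers dimension $< 4$.

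The only condition requiring a numerical check is (3). The residue field inherits characteristic $2$, so when finite it has $2^d$ elements, and $2^d > 64$ forces $d \geq 7$. For $n \in \{2,3\}$ the hypothesis of Proposition \ref{iso-hut} reads $(p-1)d = d > 2n$; since $d \geq 7 > 6 = 2\cdot 3$ this holds for both $n=2$ and $n=3$. The case $n=3$ is the binding one, which is exactly why the threshold $64 = 2^6$ appears in the statement. If instead the residue field is infinite, Proposition \ref{iso-hut} supplies the isomorphism $H_n(T(A),\z) \simeq H_n(B(A),\z)$ with no further condition, so (3) holds in either case.

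With (1)--(3) established and $A$ a domain, the domain statement of Theorem \ref{char=2} yields the exact sequence
\[
0 \arr \tors(\mu(A),\mu(A)) \arr H_3(\SL_2(A),\z) \arr \RB(A) \arr 0,
\]
which is precisely the assertion of the corollary. I do not expect any genuine obstacle here; the only step warranting attention is matching the residue-field bound to the inequality $(p-1)d > 6$ needed at $n=3$, and the hypothesis that $A/\mmm_A$ has more than $64$ elements is calibrated to make exactly this inequality hold.
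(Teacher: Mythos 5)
Your proposal is correct and follows exactly the paper's own argument: the paper's proof of this corollary is a one-line citation of Proposition \ref{GE2C}, Proposition \ref{iso-hut}, and Theorem \ref{char=2}, and your verification of conditions (1)--(3) — including the key numerical check that $2^d > 64$ forces $d \geq 7 > 2\cdot 3$ in Proposition \ref{iso-hut} for $n=3$ — is precisely the unwritten content of that citation. No issues to report.
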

\begin{proof}
This follows from Proposition \ref{GE2C}, Proposition \ref{iso-hut} and  Theorem \ref{char=2}.
\end{proof}



\begin{thebibliography}{99}




\bibitem{brown1994}
Brown, K. S. Cohomology of Groups.  Graduate Texts in Mathematics, 87. Springer-Verlag, New York (1994)

\bibitem{breen1999}
L. Breen. On the functorial homology of abelian groups. 
Journal of Pure and Applied Algebra {\bf 142} (1999) 199--237.


 
 \bibitem{C-H2022}
Coronado, R. C.,  Hutchinson, K. Bloch Groups of Rings. https://arxiv.org/abs/2201.04996

\bibitem{dupont-sah1982}
Dupont, J- L., Sah, C. Scissors congruences. II. J. Pure Appl. Algebra  {\bf 25}  (1982), no. 2, 159--195.

\bibitem{hutchinson-2013}
Hutchinson, K. A Bloch-Wigner complex for $\SL_2$. J. K-Theory {\bf 12},  no. 1, 
15--68 (2013)

\bibitem{hutchinson2013}
Hutchinson, K. A refined Bloch group and the third homology of $\SL_2$ of a field. J. Pure Appl. Algebra
{\bf 217}, 2003--2035 (2013)


\bibitem{hutchinson2017}
Hutchinson, K. The third homology of $\SL_2$ of local rings.  Journal of Homotopy and Related 
Structures {\bf 12}, 931--970 (2017)

\bibitem{hutchinson-2017}
Hutchinson, K. The third homology of $\SL_2$ of fields with discrete valuation. J. Pure Appl. Algebra
{\bf 221} (5), 1076--1111 (2017)

\bibitem{hutchinson2021}
Hutchinson, K. The third homology of $\SL_2(\q)$. J. Algebra {\bf 570} (2021), 366--396


\bibitem{hutchinson-tao2009}
Hutchinson, K., Tao, L. The third homology of the special linear group of a field. J. Pure Appl.
Algebra {\bf 213} (2009), no. 9, 1665--1680.

\bibitem{hmm2022}
Hutchinson, K., Mirzaii, B.,  Mokari, F. Y. The homolog of $\SL_2$ of discrete valuation rings. 
Advances in Mathematics {\bf 402} (2022) 108313

\bibitem{levine1989}
Levine, M. The indecomposable $K_3$ of fields. Ann. Sci. \'Ecole Norm. Sup. (4) 
{\bf 22}  (1989), 255--344.





\bibitem{merkurjev-suslin1990}
Merkurjev, A. S., Suslin, A. A. The group $K_3$ for a field. Math. USSR-Izv. 
{\bf 36} , no. 3, 541--565 (1991)

\bibitem{mirzaii-2008}
Mirzaii, B. Third homology of general linear groups. J. Algebra {\bf 320} (2008), no. 5, 1851--1877.

\bibitem{mirzaii2011}
Mirzaii, B. Bloch-Wigner theorem over rings with many units. Math. Z.
{\bf 268}, 329--346 (2011). Erratum to: Bloch-Wigner theorem over rings  with many units. Math. Z.  {\bf 275}, 653--655 (2013)

\bibitem{mirzaii2017}
Mirzaii, B. A Bloch-Wigner exact sequence over local rings. Journal of Algebra {\bf 476}, 459--493 (2017)





\bibitem{sah1989}
Sah, C. Homology of classical Lie groups made discrete. III. J. Pure Appl. Algebra {\bf 56} (1989), no. 3, 269--312.


 
 

\bibitem{suslin1991}
Suslin, A. A. $K\sb 3$ of a field and the Bloch group.
Proc. Steklov Inst. Math. {\bf 183}, no. 4, 217--239 (1991)





\end{thebibliography}
\end{document}